\newtheorem{theorem}{Theorem}[section]
\newtheorem{lemma}[theorem]{Lemma}
\theoremstyle{definition}
\newtheorem{problem}{Problem}
\numberwithin{equation}{section}
\newcommand{\dZ}{\mathbb Z}          
\DeclareMathOperator{\dens}{\,\mathrm dens}             
\newcommand{\tL}{\mathtt 1}                             
\newcommand{\tO}{\mathtt 0}                             
\newcommand{\bt}{\mathbf t}                             
\title{The sum-of-digits function on arithmetic progressions}
\author[Spiegelhofer]{Lukas Spiegelhofer}
\address{Institute of Discrete Mathematics and Geometry,
Vienna University of Technology,
Vienna, Austria}
\author[Stoll]{Thomas Stoll}
\address{
Institut \'Elie Cartan de Lorraine,
Universit\'e de Lorraine,
Vand\oe uvre-l\`es-Nancy, France}
\thanks{
The authors acknowledge support by the project MuDeRa,
which is a joint project between the FWF (Austrian Science Fund) and the ANR (Agence Nationale de la Recherche).
Moreover, the first author was supported by the FWF project F5502-N26, which is a part of the Special Research Program ``Quasi Monte Carlo methods: Theory and Applications'';
the second author was supported by the project ANR-18-CE40-0018.}
\subjclass[2010]{11A63, 11B25}
\begin{document}
\maketitle
\begin{abstract}
Let $s_2$ be the sum-of-digits function in base $2$, which returns the number of non-zero binary digits of a nonnegative integer $n$.
We study $s_2$ along arithmetic subsequences and show that --- up to a shift --- the set of $m$-tuples of integers that appear as an arithmetic subsequence of $s_2$ has full complexity.
\end{abstract}

\section{Results}
The binary sum-of-digits function $s_2$ is an elementary object studied in number theory.
It is defined by the equation
\[s_2(\varepsilon_\nu 2^\nu+\cdots+\varepsilon_0 2^0)=\varepsilon_\nu+\cdots+\varepsilon_0,\]
where $\varepsilon_i\in\{0,1\}$ for $0\leq i\leq\nu$.
Despite the simplicity of definition, the behaviour of $s_2$ on arithmetic progressions is not fully understood.
Cusick's conjecture on the sum-of-digits function
~\cite{DKS2016,S2019} concerns this area of research:
for an integer $t\geq 0$, we define the limit
\[c_t=\lim_{N\rightarrow \infty}\frac 1N\left\lvert\{n:0\leq n<N,s_2(n+t)\geq s_2(n)\}\right\rvert.
\]
(The limit exists, see for example B\'esineau~\cite{B1972}.
In fact, the set in this definition is periodic with period $2^k$ for some $k$.)
Cusick's conjecture states that
\begin{equation}\label{eqn_Cusick}
c_t>1/2
\end{equation} for all $t\geq 0$.
Drmota, Kauers, and the first author~\cite{DKS2016} proved that $c_t>1/2$ for \emph{almost all} $t$ in the sense of asymptotic density;
we also wish to note the works by Emme and Prikhod'ko~\cite{EP2017} and Emme and Hubert~\cite{EH2018,EH2018b}, and the recent partial result by the first author~\cite{S2019}.

In the current note, motivated by Cusick's conjecture, we are concerned with the $(m+1)$-tuple $\bigl(s_2(n),s_2(n+t),\ldots,s_2(n+mt)\bigr)$,
where $t\geq 0$ and $m\geq 1$ are integers.
We aim to understand the set of tuples that can occur, as $n$ and $t$ run.
In fact, our theorem states that, up to a shift, all tuples occur. 
\begin{theorem}\label{thm_main}  
Assume that $k_1,\ldots,k_m\in\dZ$.
There exist $n$ and $t$ such that for $1\leq \ell\leq m$,
\[k_\ell=s_2(n+\ell t)-s_2(n).\]
\end{theorem}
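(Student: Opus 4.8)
The plan is to reduce the statement to a purely additive problem in $\dZ^m$ and then solve that problem by a combination of a lattice argument and a cone argument.

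\emph{Block reduction.} First I would choose widely separated exponents $g_1<\cdots<g_s$ and set $t=\sum_{i=1}^s 2^{g_i}$ and $n=\sum_{i=1}^s C_i\,2^{g_i}$ for nonnegative ``chunks'' $C_i$ to be determined. If the gaps satisfy $C_i+m<2^{g_{i+1}-g_i}$, then for each $1\le\ell\le m$ the addition of $\ell t$ acts on the blocks independently, $n+\ell t=\sum_i (C_i+\ell)2^{g_i}$ with no carry crossing a block boundary, so that $s_2(n+\ell t)-s_2(n)=\sum_{i=1}^s\bigl(s_2(C_i+\ell)-s_2(C_i)\bigr)$. Thus it suffices to realise the target $(k_1,\dots,k_m)$ as a sum of finitely many chunk-vectors $F(C):=\bigl(s_2(C+1)-s_2(C),\dots,s_2(C+m)-s_2(C)\bigr)$; equivalently, to show that the additive monoid $M\subseteq\dZ^m$ generated by $\{F(C):C\ge 0\}$ is all of $\dZ^m$.

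\emph{Passage to windows.} Writing $W_x:=s_2(x+1)-s_2(x)=1-\tau(x)$, where $\tau(x)$ denotes the number of trailing binary ones of $x$, the vector $F(C)$ is the sequence of partial sums of the length-$m$ window $v(C):=(W_C,W_{C+1},\dots,W_{C+m-1})$. Since taking partial sums is a bijection of $\dZ^m$, the assertion $M=\dZ^m$ is equivalent to the statement that the monoid generated by all length-$m$ windows of the sequence $W=(W_x)_{x\ge0}$ equals $\dZ^m$.

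\emph{Solving the additive problem.} Two features of $W$ drive the argument, where $e_j$ denotes the $j$-th standard basis vector. First, the window $G_{j,k}$ positioned so that the deep value $W_{2^k-1}=1-k$ lands at coordinate $j$ has the form $G_{j,k}=P_j+(1-k)e_j$, with $P_j$ a fixed vector independent of $k$ (every other argument in a window of length $m\ll 2^k$ has $\tau$ bounded by $\log_2 m+O(1)$); hence $G_{j,k}-G_{j,k+1}=e_j$, so the generators span $\dZ^m$ as a group. Second, the mean value of $\tau$ over the integers equals $1$, so windows of $W$ average to $0$, and therefore no nonzero linear functional can be $\le 0$ on all windows: placing the unboundedly negative value $1-k$ at a coordinate on which the functional is nonzero sends the pairing to $\mp\infty$. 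Consequently the real cone generated by the windows is all of $\mathbb R^m$. A submonoid of $\dZ^m$ that both generates $\dZ^m$ as a group and has full real cone must equal $\dZ^m$: fullness of the cone yields a positive integer relation $\sum_i c_i g_i=0$ with all $c_i>0$ among finitely many generators $g_i$ that also $\dZ$-span, and then any integer representation $x=\sum_i b_i g_i$ can be rewritten as $x=\sum_i(b_i+Nc_i)g_i$ with all coefficients nonnegative once $N$ is large. This gives $M=\dZ^m$ and hence the theorem.

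The main obstacle is the final step. Every window of $W$ has all entries at most $1$, so a large positive target cannot arise from any single chunk; reconciling this with the unbounded \emph{negative} depths available from the points $2^k-1$, through the cone-versus-lattice dichotomy, is the crux of the proof, whereas the block reduction and the passage to windows are essentially bookkeeping.
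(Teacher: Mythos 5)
Your route is genuinely different from the paper's. The paper proceeds by induction on $m$: its Lemma~\ref{lem_block_decomposition} (which is exactly your block reduction, specialized to local steps $t_j=1$) reduces the induction step to realizing the consecutive-difference vectors $(0,\ldots,0,\pm1)$ as in~\eqref{eqn_sufficient}, and these are produced explicitly by telescoping $\sum_{a\le j<a+2^L}g(j)=s_2(a+2^L)-s_2(a)=g(\lfloor a/2^L\rfloor)$ over well-chosen dyadic intervals. You bypass the induction entirely and prove the stronger, non-inductive claim that the additive monoid generated by the windows $v(C)=(W_C,\ldots,W_{C+m-1})$ is all of $\dZ^m$, via a lattice-plus-cone dichotomy. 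Your block reduction, the unimodular passage from the vectors $F(C)$ to windows, the computation $G_{j,k}-G_{j,k+1}=e_j$ (so the windows generate $\dZ^m$ as a group), and the final monoid lemma (group-spanning plus full real cone implies the monoid is the whole lattice) are all correct; this is an attractive, more conceptual alternative to the paper's explicit construction, at the price of invoking convex duality.

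There is, however, one genuine gap: the proof that the cone generated by the windows is all of $\mathbb{R}^m$. Your argument disposes of any functional $\lambda$ having a negative coordinate $\lambda_j<0$, since then $\lambda\cdot G_{j,k}=\lambda\cdot P_j+(1-k)\lambda_j\to+\infty$, contradicting $\lambda\le 0$ on all windows. But if $\lambda\ge 0$ componentwise and $\lambda\neq 0$, the same windows give $\lambda\cdot G_{j,k}\to-\infty$, which is perfectly consistent with nonpositivity; and the mean-zero property of $W$ does not close this case as stated, because a nonpositive sequence with Ces\`aro mean zero need not vanish --- it only has to be small outside a set of density zero, and the windows $G_{j,k}$ live on a density-zero set of positions. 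So nonnegative functionals are not excluded by anything you wrote. The fix is short and, amusingly, is exactly the telescoping identity that powers the paper's proof: for $2^N>m$ one has $\sum_{0\le C<2^N}W_{C+i-1}=s_2(2^N+i-1)-s_2(i-1)=1$ for each $1\le i\le m$, so $\sum_{0\le C<2^N}v(C)=(1,\ldots,1)$ is itself a finite sum of windows; hence any $\lambda$ that is $\le 0$ on all windows satisfies $\sum_j\lambda_j\le 0$, which together with $\lambda\ge 0$ forces $\lambda=0$. (Alternatively, one can place a fixed depth $a$ at coordinate $j$ along the residue class $C\equiv 2^a-j \pmod{2^{a+1}}$, a set of positive density, and contradict the density-zero conclusion from the mean.) With that one-line patch, your proof is complete.
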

This is a generalization of the statement that the Thue--Morse sequence $\mathbf t$ has full \emph{arithmetic complexity}, meaning that every finite word $\omega\in\{0,1\}^L$ occurs as an arithmetic subsequence of $\mathbf t$.
This was first proved in~\cite{AFF2003}
and also follows from M\"ullner and the first named author~\cite{MS2017}, and Konieczny~\cite{K2017}.

Theorem~\ref{thm_main} is not hard to prove for $m=1$.
We present three arguments leading to this fact.

\begin{enumerate}
\item Assume first that $k\geq 0$.
Set $n=2^{k+1}$ and $t=2^k-1$.
Then $s_2(n+t)=k+1$ and $s_2(n)=1$, yielding $k=s_2(n+t)-s_2(n)$.
If $k<0$, we set $n=2^{-k+1}-1$ and $t=1$.
Then $s_2(n)=-k+1$ and $s_2(n+t)=1$, which yields $s_2(n+t)-s_2(n)=k$.

\noindent
Alternatively, we may also write, as in the case $m=2$ presented below, 
$t=2^c-1$ and $n=2^{c-1}\left(2^a-1\right)$, for positive integers $a$ and $c$.
We obtain $s_2(n+t)=c$ and $s_2(n)=a$, and clearly the difference $c-a$ runs through all integers.

\item We have $s_2(n+1)-s_2(n)=1-\nu_2(n+1)\leq 1$, where $\nu_2(m)=\max\{k\geq 0:2^k\mid m\}$ for $m\geq 1$ is the $2$-adic valuation of $m$.
This formula follows by considering the number of $1$s with which the binary expansion of $n$ ends.
Since $s_2(2^\ell)=1$ and $s_2(2^{\ell+1}-1)=\ell+1$, we obtain the fact that $s_2(n)$ attains all values in $\{1,\ldots,\ell+1\}$ as $n$ varies in $\{2^\ell,\ldots,2^{\ell+1}-1\}$.
Let $k\in\mathbb Z$ be given and set $\ell=2\lvert k\rvert$.
Choose $n\in\{2^\ell,\ldots,2^{\ell+1}-1\}$ such that $s_2(n)=\lvert k\rvert +1$ and $n'\in\{2^{\ell+1},\ldots,2^{\ell+2}-1\}$ such that $s_2(n')=\lvert k\rvert+1+k$. Then $s_2(n')-s_2(n)=k$, which implies the statement.
\item Consider the densities 
\[\delta(k,t)=\lim_{N\rightarrow \infty}\frac 1N\left\lvert\{n:0\leq n<N,s_2(n+t)-s_2(n)=k\}\right\rvert
\]
(as it was the case for $c_t$, this asymptotic density exists~\cite{B1972}).
These quantities satisfy the following recurrence~\cite{DKS2016}:
\begin{equation*}
\begin{aligned}
\delta(k,1)&=\begin{cases}2^{k-2},&k\leq 1;\\0&\mbox{otherwise;}\end{cases}
\\
\delta(k,2t)&=\delta(k,t);\\
\delta(k,2t+1)&=\frac 12 \delta(k-1,t)+\frac 12\delta(k+1,t+1).
\end{aligned}
\end{equation*}
From this, it is very easy to show that $\delta(k,t)>0$ for all $k\leq s_2(t)$.
For $k$ given, choose $t$ in such a way that $s_2(t)\geq k$; the positivity of the density $\delta(k,t)$ implies that there exists an $n$ such that $s_2(n+t)-s_2(n)=k$.
\end{enumerate}

For $m=2$, it is also possible to obtain the statement by elementary considerations:
consider integers $a,c\geq 1, b,d\geq 0$ and choose the integers $n$ and $t$ in such a way that the binary expansions look as follows:
\[
\begin{array}{rl@{\hspace{1pt}}l@{\hspace{1pt}}l@{\hspace{1pt}}l@{\hspace{1pt}}l}
  &\multicolumn{2}{l}{\overbrace{\hspace{1.1cm}}^a}&&\overbrace{\hspace{0.85cm}}^b\\[-0.4em]
n:&\tL\cdots\tL&\tL&\tO\cdots\tO  &\tL\cdots\tL&\tO\cdots\tO    \\
t:&                            &\tL&\tL\cdots\tL&\tO\cdots\tO &\tL\cdots\tL.\\[-1em]
  &&\multicolumn{2}{l}{\hspace{-0.3em}\underbrace{\hspace{1.1cm}}_c}&&\underbrace{\hspace{0.85cm}}_d\\
\end{array}
\]
The sums of digits of $n$, $n+t$ and $n+2t$ respectively are $a+b$, $b+c+d$ and $c+d$ respectively. By varying the variables, we can obtain the statement for all integers $k_1$ and $k_2$ such that $k_2\leq k_1$.
For the case $k_1<k_2$, we use the following configuration of the integers $n$ and $t$, where $a,d\geq 1$ and $c\geq 0$:
\[
\begin{array}{rl@{\hspace{1pt}}l@{\hspace{1pt}}l@{\hspace{1pt}}l@{\hspace{1pt}}l}
  &&\multicolumn{2}{l}{\hskip -0.5em\overbrace{\hspace{1.1cm}}^a}\\[-0.4em]
n:&&\tL\cdots\tL&\tL&\tO\cdots\tO\\
t:&\tL\cdots\tL&\tO\cdots\tO&\tL&\tL\cdots\tL.\\[-1em]
  &\underbrace{\hspace{0.85cm}}_c&&\multicolumn{2}{l}{\hskip -0.2em\underbrace{\hspace{1.1cm}}_d}\\
\end{array}
\]
The sums of digits of $n$, $n+t$ and $n+2t$ are $a$, $d$ and $c+d$ respectively, and we see that we obtain all pairs $(k_1,k_2)\in\mathbb Z^2$ such that $k_1\leq k_2$.

However, the method quickly experiences difficulties, as multiplication by $3$ is not a shift of the binary digits anymore.
While we believe that the case $m=3$ can be made work by some effort, a general principle is not apparent.
Therefore we choose a different approach.

We prove Theorem~\ref{thm_main} by induction on $m$, the cases $m=1,2$ having been discussed above.
Assume that $m\geq 3$ and let $k_1,\ldots,k_m\in\dZ$ be given.
By induction hypothesis, there exist $t_0$ and $n_0$ such that $k_\ell=s_2(n_0+\ell t_0)-s_2(n_0+(\ell-1)t_0)$ for $1\leq \ell<m$.
Set $k'_m=s_2(n_0+mt_0)-s_2(n_0+(m-1)t_0)$.
We are going to show that we may vary $k'_m$ by steps of $\pm 1$, thus yielding the full statement.

By concatenation of binary expansions, it is sufficient to show the following statement.
\begin{equation}\label{eqn_sufficient1}
\begin{aligned}&\mbox{There exist $t_1$, $n_1$ such that $s_2(n_1+\ell t_1)-s_2(n_1+(\ell-1)t_1)=0$ for $1\leq \ell<m$}\\
&\mbox{and $s_2(n_1+mt_1)-s_2(n_1+(m-1)t_1)=\pm 1$}.
\end{aligned}
\end{equation}
This concatenation is straightforward and summarized in the following lemma, which we will also use again in a moment.
\begin{lemma}\label{lem_block_decomposition}
Let $\ell\geq 1$, $m\geq 1$, $n_0,\ldots,n_{k-1}$ and $t_0,\ldots,t_{k-1}$ be nonnegative integers.
There exist nonnegative integers $n$ and $t$ such that
\[s_2(n+\ell t)-s_2(n+(\ell-1)t)=\sum_{0\leq j<k}\bigl(s_2(n_j+\ell t_j)-s_2(n_j+(\ell-1)t_j)\bigr)\]
for $1\leq \ell\leq m$.
\end{lemma}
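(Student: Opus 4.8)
The plan is to realize $n$ and $t$ as interleavings of the given data, placing the pairs $(n_j,t_j)$ into widely separated windows of binary digits so that the sum-of-digits function becomes additive across blocks. The two elementary facts I would build on are that $s_2(x\cdot 2^e)=s_2(x)$ for every $e\geq 0$, and that $s_2(x+y)=s_2(x)+s_2(y)$ whenever $x$ and $y$ have disjoint binary supports, since then no carries occur.

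First I would bound the size of the relevant blocks. As $\ell$ ranges over $\{1,\ldots,m\}$ (and we also encounter $\ell-1=0$), every integer $n_j+\ell t_j$ with $0\leq\ell\leq m$ satisfies $n_j+\ell t_j\leq n_j+m t_j$, hence has binary length at most $L_j:=\lfloor\log_2(n_j+m t_j)\rfloor+1$ (with $L_j=1$ in the degenerate case $n_j=t_j=0$). I would then define separating exponents recursively by $e_0=0$ and $e_{j+1}=e_j+L_j$, and set
\[
n=\sum_{0\leq j<k}n_j2^{e_j},\qquad t=\sum_{0\leq j<k}t_j2^{e_j}.
\]

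The heart of the argument is the identity $n+\ell t=\sum_{0\leq j<k}(n_j+\ell t_j)2^{e_j}$, together with the observation that for every $\ell\in\{0,1,\ldots,m\}$ the summands $(n_j+\ell t_j)2^{e_j}$ have pairwise disjoint binary supports: the $j$-th summand is supported on bit positions in $[e_j,e_j+L_j)=[e_j,e_{j+1})$, and these intervals are disjoint by construction. Consequently $s_2$ distributes over the sum, giving
\[
s_2(n+\ell t)=\sum_{0\leq j<k}s_2\bigl((n_j+\ell t_j)2^{e_j}\bigr)=\sum_{0\leq j<k}s_2(n_j+\ell t_j),
\]
and subtracting the corresponding identity for $\ell-1$ yields exactly the claimed equality, with a single pair $n,t$ serving all $\ell$ simultaneously.

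The only point demanding care — and the step I expect to be the main, if modest, obstacle — is verifying that no carries leak across block boundaries. This reduces to checking that the length bound $L_j$ really controls $n_j+\ell t_j$ for all $\ell\leq m$, which is immediate from $n_j+\ell t_j\leq n_j+m t_j<2^{L_j}$, so that $(n_j+\ell t_j)2^{e_j}<2^{e_{j+1}}$ has its support confined to $[e_j,e_{j+1})$. Once this is in hand, the disjointness of supports, and hence the additivity of $s_2$, is automatic, and the lemma follows.
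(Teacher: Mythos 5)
Your proof is correct and follows essentially the same approach as the paper: both realize $n$ and $t$ by concatenating the binary expansions of the pairs $(n_j,t_j)$ in windows wide enough (controlled by $n_j+mt_j$) that no carries cross block boundaries, so $s_2$ becomes additive over the blocks. The only difference is presentational --- the paper reduces to the case $k=2$ and iterates, while you write the full concatenation with explicit offsets $e_j$ in one step.
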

\begin{proof}
The base case $k=1$ is trivial;
it is sufficient to prove the statement for $k=2$, the general case following easily from repeated application of this case.

Let $N$ be so large that $n_0+mt_0<2^N$, and set
$n=2^Nn_1+n_0$ and $t=2^Nt_1+t_0$. Since no carry propagation between the digits below and above $N$ occurs, we can add up the contribution of the two blocks in order to yield the statement.
\end{proof}
We reduce the problem further, using this block representation again:
choose $t_j=1$ for all $0\leq j<k$;
it is sufficient to find  a $k\geq 1$ and nonnegative integers $n_j$ for $0\leq j<k$ such that
\begin{align}\label{eqn_sufficient}
\sum_{0\leq j<k}\left(s_2(n_j+\ell)-s_2(n_j+\ell-1)\right)
=\begin{cases}0,&\textrm{if }1\leq \ell< m;\\\pm 1,&\textrm{if }\ell=m.\end{cases}
\end{align}

In order to show~\eqref{eqn_sufficient}, we use the telescoping sum
\[
\sum_{a\leq j<a+2^L}g(j)=s_2(a+2^L)-s_2(a)=g\left(\left\lfloor a/2^L\right\rfloor\right),
\]
where
$g(j)=s_2(j+1)-s_2(j)$.
This representation yields for $1\leq \ell\leq m$, where $L$ is chosen such that $2^L\leq m<2^{L+1}$,
\begin{align*}
\sum_{2\cdot 2^L-m+\ell\leq j< 3\cdot 2^L-m+\ell}g(j)
&=g\left(2+\lfloor (-m+\ell)/2^L\rfloor\right)
=\begin{cases}
g(1)=0,&\textrm{if }1\leq \ell<m;\\
g(2)=1,&\textrm{if }\ell=m;
\end{cases}
\\
\sum_{2^L-m+\ell\leq j< 2\cdot 2^L-m+\ell}g(j)
&=g\left(1+\lfloor (-m+\ell)/2^L\rfloor\right)
=
\begin{cases}
g(0)=1,&\textrm{if }1\leq \ell<m;\\
g(1)=0,&\textrm{if }\ell=m;
\end{cases}
\\
\sum_{3\cdot 2^{L+1}+\ell\leq j< 4\cdot 2^{L+1}+\ell}g(j)
&=
g(3)=-1\mbox{ for }1\leq \ell\leq m.
\end{align*}
The first of these three identities yields the ``$+$''-part of~\eqref{eqn_sufficient} by choosing $k=2^L$ and $n_j=2\cdot 2^L-m+j$ for $0\leq j<k$.

The ``$-$''-part is obtained from the second and third identities:
by considering the disjoint union
$J=[2^L-m,2\cdot 2^L-m)\cup [3\cdot 2^{L+1},4\cdot 2^{L+1})$,
we have
\[\sum_{j\in J}\left(s_2(j+\ell)-s_2(j+\ell-1)\right)=
\begin{cases}
g(0)=0,&\textrm{if }1\leq \ell<m;\\
g(1)=-1,&\textrm{if }\ell=m.
\end{cases}
\]
The statement follows by merging the two intervals and choosing $n_j$ accordingly.
This finishes the proof of our theorem.

\section{Possible extensions}
From our proof, it is possible to actually construct integers $n$ and $t$ such that $s_2(n+\ell t)-s_2(n)=k_\ell$ for $1\leq \ell\leq m$.
In particular, this yields integers $n$ and $t$ such that $\bt_{n+\ell t}=\omega_\ell$ for $1\leq \ell\leq m$, where $(\omega_1,\ldots,\omega_m)\in\{0,1\}^m$
and $\bt$ is the Thue--Morse sequence on $\{0,1\}$.
(Note that we also used $\bt(2n+1)=1-\bt(n)$.)
This gives a constructive result concerning the problem of full arithmetic complexity of the Thue--Morse sequence considered in~\cite{AFF2003,K2017,MS2017}.

As an extension of the presented line of research, we are interested in the proportion of cases in which $s_2(n+\ell t)-s_2(n)=k_\ell$ occurs (for $1\leq \ell\leq m$).
For this, we define more generally
\[
\delta(\mathbf k,\boldsymbol\varepsilon,t)=\dens\left\{n:s_2(n+\ell t+\varepsilon_\ell)-s_2(n)=k_\ell\mbox{ for }1\leq \ell\leq m\right\},
\]
where $\mathbf k=(k_1,\ldots,k_m)\in\mathbb Z^m$ and $\boldsymbol\varepsilon=(\varepsilon_1,\ldots,\varepsilon_m)\in\mathbb N^m$.
This generalizes the array $\delta$ defined before.
As in the one-dimensional case, the densities in this definition actually exist,
and they satisfy the following recurrence relation:
\begin{align*}
\delta(\mathbf k,\boldsymbol\varepsilon,2t)
&=\frac 12\dens\{n:s(2n+2\ell t+\varepsilon_\ell)-s(2n)=k_\ell\mbox{ for }1\leq \ell\leq m\}
\\&+\frac 12\dens\{n:s(2n+2\ell t+\varepsilon_\ell+1)-s(2n+1)=k_\ell\mbox{ for }1\leq \ell\leq m\}
\\&=\frac 12\delta(\mathbf k',\boldsymbol\varepsilon',t)
+\frac 12\delta(\mathbf k'',\boldsymbol\varepsilon'',t),
\end{align*}
where $k'_\ell=k_\ell-\varepsilon_\ell\bmod 2$, 
$k''_\ell=k_\ell+1-(\varepsilon_\ell+1) \bmod 2$,
$\varepsilon'_\ell=\lfloor \varepsilon_\ell/2\rfloor$ and
$\varepsilon''_\ell=\lfloor (\varepsilon_\ell+1)/2\rfloor$;
moreover,
\begin{align*}
\delta(\mathbf k,\boldsymbol\varepsilon,2t+1)
&=\frac 12\dens\{n:s(2n+2\ell t+\varepsilon_\ell+\ell)-s(2n)=k_\ell\mbox{ for }1\leq \ell\leq m\}
\\&+\frac 12\dens\{n:s(2n+2\ell t+\varepsilon_\ell+\ell+1)-s(2n+1)=k_\ell\mbox{ for }1\leq \ell\leq m\}
\\&= 
\frac 12\delta(\mathbf k',\boldsymbol\varepsilon',t)
+\frac 12\delta(\mathbf k'',\boldsymbol\varepsilon'',t),
\end{align*}
where $k'_\ell=k_\ell-(\varepsilon_\ell+\ell)\bmod 2$, 
$k''_\ell=k_\ell+1-(\varepsilon_\ell+\ell+1) \bmod 2$,
$\varepsilon'_\ell=\lfloor (\varepsilon_\ell+\ell)/2\rfloor$ and
$\varepsilon''_\ell=\lfloor (\varepsilon_\ell+\ell+1)/2\rfloor$.
This recurrence is the reason for the introduction of $\boldsymbol\varepsilon$.

This recurrence can be used to prove statements on the densities $\delta(\mathbf k,\boldsymbol\varepsilon,t)$.
On the one hand, we may ask for generalizations of Cusick's conjecture, relating the relative sizes of the values $s_2(n),s_2(n+t),\ldots,s_2(n+mt)$ to one another.
\begin{problem}\label{pb_cusick_multidim}
 Consider generalizations of Cusick's conjecture, proving for example, for many $t$, that
\[\dens\{n: s_2(n+t)\geq s_2(n)\mbox{ and }s_2(n+2t)\leq s_2(n+t)\}>1/4.\]
Moreover, show that the constant $1/4$ is optimal.
\end{problem}
On the other hand, we could ask for the overall shape of the $m$-dimensional
probability distribution defined by $\delta(\cdot,\boldsymbol\varepsilon,t)$.
\begin{problem}\label{pb_EH_multidim}
Prove a multidimensional generalization of the theorem by Emme and Hubert~\cite{EH2018}: for most $t$, the densities 
$\dens\{n:s_2(n+\ell t)-s_2(n)=k_\ell\mbox{ for }1\leq \ell\leq m\}$ should define a probability distribution that is close to a multivariate Gaussian law. 
\end{problem}

We expect that nontrivial statements on both Problem~\ref{pb_cusick_multidim} and Problem~\ref{pb_EH_multidim}, at least for small $m$, can be obtained by extending the study of moments set forward by Emme and Hubert~\cite{EH2018}.
The transition to arbitrary dimensions $m$ however will necessitate new ideas.

\bibliographystyle{siam}
\bibliography{AP}

\end{document}